\date{\today}
\newtheorem{theorem}{Theorem}%[section]
\newtheorem{corollary}[theorem]{Corollary}
\newtheorem{lemma}[theorem]{Lemma}
\theoremstyle{definition}
\newtheorem{example}[theorem]{Example}%[section]
\newtheorem{remark}[theorem]{Remark}%[section]
\begin{document}

\title[ON THE DICHOTOMY OF A LOCALLY COMPACT SEMITOPOLOGICAL MONOID OF...]{ON THE DICHOTOMY OF A LOCALLY COMPACT SEMITOPOLOGICAL MONOID OF ORDER ISOMORPHISMS BETWEEN PRINCIPAL FILTERS OF $\mathbb{N}^n$ WITH ADJOINED ZERO}

\author[Taras~Mokrytskyi]{Taras~Mokrytskyi}
\address{Faculty of Mathematics, National University of Lviv,
Universytetska 1, Lviv, 79000, Ukraine}
\email{tmokrytskyi@gmail.com}

\keywords{Semigroup, inverse semigroup, bicyclic monoid, semitopological semigroup, topological semigroup, locally compact, compact, discrete.}

%\subjclass[????]{????}

\begin{abstract}
Let $n$ be any positive integer and $\mathscr{I\!\!P\!F}(\mathbb{N}^n)$ be the semigroup of all order isomorphisms between principal filters of the $n$-th power of the set of positive integers $\mathbb{N}$ with the product order.  We prove that a Hausdorff locally compact semitopological semigroup  ${\mathscr{I\!\!P\!F}(\mathbb{N}^n)}$ with an adjoined zero is either compact or discrete.

\end{abstract}

\maketitle

Further we shall follow the terminology of \cite{Carruth-Hildebrant-Koch-1983-1986, Clifford-Preston-1961-1967, Engelking-1989, Ruppert-1984}. In this paper we shall denote the set of positive integers by $\mathbb{N}$, the set of non-negative integers by $\mathbb{N}_0$, a semigroup $S$ with the an adjoined by $S^0$ (cf. \cite{Clifford-Preston-1961-1967}), the symmetric group of degree $n$ by $\mathscr{S}_n$, i.e., $\mathscr{S}_n$ is the group of all permutations of an $n$-element set. All topological spaces, considered in this paper, are assumed to be Hausdorff.

\smallskip

A semigroup $S$ is called \emph{inverse} if for every $x \in S$ there exists a unique $y \in S$ such that $xyx = x$ and $yxy = y$. Later such an element $y$ will be denoted by $x^{-1}$ and will be called the \emph{inverse} of $x$. A map $\textrm{inv}\colon S \rightarrow S$ which assigns to every $s \in S$ its inverse is called \emph{inversion}.

\smallskip

If $Y$ is a subspace of a topological space $X$ and $A \subset Y$, then by $\textrm{cl}_Y(A)$ we denote the topological closure of $A$ in $Y$.

\smallskip

A \emph{semitopological (topological) semigroup} is a topological space with separately continuous (jointly continuous) semigroup operation. An inverse topological semigroup with continuous inversion is called \emph{a topological inverse semigroup}.

\smallskip

We recall that a topological space X is \emph{locally compact} if every point $x$ of $X$ has an open neighbourhood $U(x)$ with the compact closure
$\textrm{cl}_X(U(x))$.

\smallskip

The \emph{bicyclic semigroup} (or the \emph{bicyclic monoid}) $\mathscr{C}(p,q)$ is the semigroup with the identity 1 generated by elements $p$ and $q$ and the relation $pq = 1$.

\smallskip

The bicyclic semigroup plays an important role in algebraic theory of semigroups and in the theory of topological semigroups. For instance a well-known Andersen's result~\cite{Andersen-1952} states that a ($0$--)simple semigroup with an idempotent is completely ($0$--)simple if and only if it does not contain an isomorphic copy of the bicyclic semigroup. The bicyclic monoid admits only the discrete semigroup topology. Bertman and  West in \cite{Bertman-West-1976} extended this result for the case of semitopological semigroups. Stable and $\Gamma$-compact topological semigroups do not contain the bicyclic monoid~\cite{Anderson-Hunter-Koch-1965, Hildebrant-Koch-1986}. The problem of an embedding of the bicyclic monoid into compact-like topological semigroups was studied in \cite{Banakh-Dimitrova-Gutik-2009, Banakh-Dimitrova-Gutik-2010, Gutik-Repovs-2007}.

\smallskip

For an arbitrary positive integer $n$ by $\left(\mathbb{N}^n,\leqslant\right)$ we denote the $n$-th power of the set of positive integers $\mathbb{N}$ with the product order:
\begin{equation*}
  \left(x_1,\ldots,x_n\right)\leqslant\left(y_1,\ldots,y_n\right) \qquad \hbox{if and only if} \qquad x_i\leq y_i \quad \hbox{for all} \quad i=1,\ldots,n.
\end{equation*}
It is obvious that the set of all order isomorphisms between principal filters of the poset $\left(\mathbb{N}^n,\leqslant\right)$ with the operation of composition of partial maps form a semigroup. This semigroup will be denoted by $\mathscr{I\!\!P\!F}(\mathbb{N}^n)$. The semigroup $\mathscr{I\!\!P\!F}(\mathbb{N}^n)$ is a generalization of the bicyclic semigroup ${\mathscr{C}}(p,q)$. Hence it is natural to ask: \emph{what algebraic and topological properties of the semigroup $\mathscr{I\!\!P\!F}(\mathbb{N}^n)$ are similar to these of the bicyclic monoid?} The structure of the semigroup $\mathscr{I\!\!P\!F}(\mathbb{N}^n)$ is studied in \cite{Gutik-Mokrytskyi-2019}. There was shown that $\mathscr{I\!\!P\!F}(\mathbb{N}^n)$ is a bisimple, $E$-unitary, $F$-inverse monoid, described Green's relations on $\mathscr{I\!\!P\!F}(\mathbb{N}^n)$ and its maximal subgroups. It was proved that $\mathscr{I\!\!P\!F}(\mathbb{N}^n)$ is isomorphic to the semidirect product of the direct $n$-th power of the bicyclic monoid ${\mathscr{C}}^n(p,q)$ by the group of permutation  $\mathscr{S}_n$, every non-identity congruence on  $\mathscr{I\!\!P\!F}(\mathbb{N}^n)$ is group and was described the least group congruence on $\mathscr{I\!\!P\!F}(\mathbb{N}^n)$. It was shown that every shift-continuous topology on $\mathscr{I\!\!P\!F}(\mathbb{N}^n)$ is discrete and discussed embedding of the semigroup $\mathscr{I\!\!P\!F}(\mathbb{N}^n)$ into compact-like topological semigroups.

\smallskip

A dichotomy for the bicyclic monoid with an adjoined zero $\mathscr{C}^0={\mathscr{C}}(p,q)\sqcup\{0\}$ was proved in \cite{Gutik-2015}: \emph{every locally compact semitopological bicyclic monoid $\mathscr{C}^0$ with an adjoined zero is either compact or discrete}.
The above dichotomy was extended by Bardyla in \cite{Bardyla-2016} to locally compact $\lambda$-polycyclic semitopological monoids, in \cite{Bardyla-2018a} to locally compact semitopological graph inverse semigroups in \cite{Gutik-Maksymyk-2016} to locally compact semitopological interassociates of the bicyclic monoid with an adjoined zero and are extended in \cite{Gutik-2018} to locally compact semitopological $0$-bisimple inverse $\omega$ semigroups with compact maximal subgroups.

\smallskip

The main purpose of this paper is to obtain counterparts of the above results to locally compact semitopological monoid $\mathscr{I\!\!P\!F}(\mathbb{N}^n)$.

\smallskip

By $\mathscr{I\!\!P\!F}(\mathbb{N}^n)^0$ we denote the monoid $\mathscr{I\!\!P\!F}(\mathbb{N}^n)$ with an adjoined zero.

\begin{lemma}\label{lemma-1.1}
Let $({\mathscr{I\!\!P\!F}(\mathbb{N}^n)}^0,\tau)$ be a locally compact non-discrete semitopological semigroup. Then:
\begin{itemize}
  \item[$(1)$] for every open neighbourhood $U(0)$ of the zero in $({\mathscr{I\!\!P\!F}(\mathbb{N}^n)}^0,\tau)$ there exists an open compact neighbourhood $V(0)$ of the zero in $({\mathscr{I\!\!P\!F}(\mathbb{N}^n)}^0,\tau)$ such that ${V(0)\subset U(0)}$;
  \item[$(2)$] for every open neighbourhood $U(0)$ of the zero in $({\mathscr{I\!\!P\!F}(\mathbb{N}^n)}^0,\tau)$ and every open compact neighbourhood $V(0)$ of the zero in $({\mathscr{I\!\!P\!F}(\mathbb{N}^n)}^0,\tau)$ the set $V(0) \cap U(0)$ is compact and open, and the set $V(0)\setminus U(0)$ is finite.
\end{itemize}
\end{lemma}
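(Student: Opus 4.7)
The plan is to first establish the structural fact that every non-zero point of $\mathscr{I\!\!P\!F}(\mathbb{N}^n)^0$ is isolated in $\tau$, then to upgrade local compactness to the existence of an actually open compact neighbourhood of zero, and finally to derive (1) and (2) from a single subtraction argument.

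For the first step, the subset $\mathscr{I\!\!P\!F}(\mathbb{N}^n)=\mathscr{I\!\!P\!F}(\mathbb{N}^n)^0\setminus\{0\}$ is open because $\{0\}$ is closed in the Hausdorff space $(\mathscr{I\!\!P\!F}(\mathbb{N}^n)^0,\tau)$. The restricted topology $\tau'$ on $\mathscr{I\!\!P\!F}(\mathbb{N}^n)$ is shift-continuous: the translations on the ambient space are continuous by assumption, and they keep $\mathscr{I\!\!P\!F}(\mathbb{N}^n)$ invariant because it is a subsemigroup. By the result of \cite{Gutik-Mokrytskyi-2019} recalled in the introduction, every shift-continuous topology on $\mathscr{I\!\!P\!F}(\mathbb{N}^n)$ is discrete, so $\tau'$ is discrete, and consequently each $s\ne 0$ is also an isolated point of $(\mathscr{I\!\!P\!F}(\mathbb{N}^n)^0,\tau)$.

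The next step is what I expect to be the genuine technical heart of the lemma: producing at least one open compact neighbourhood of $0$, rather than merely one with compact closure. Starting from any open $W(0)$ with compact closure (supplied by local compactness), I would examine $\textrm{cl}_{\mathscr{I\!\!P\!F}(\mathbb{N}^n)^0}(W(0))\setminus W(0)$, which is closed in a compact set hence compact, and which avoids $0$. By the preceding paragraph this set lies entirely in the discrete, isolated-point part of the space and so must be finite; being a finite union of isolated open singletons it is itself open. Adding this finite open boundary back into $W(0)$ recovers $\textrm{cl}_{\mathscr{I\!\!P\!F}(\mathbb{N}^n)^0}(W(0))$ as an open (and by construction compact) neighbourhood $V_0(0)$ of $0$.

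The remainder is formal. Given any open neighbourhood $U(0)$ and any open compact neighbourhood $V(0)$ of zero, the set $V(0)\setminus U(0)$ is closed in $V(0)$ hence compact, avoids $0$, and by the first paragraph is a finite set of isolated points, in particular open. Therefore $V(0)\cap U(0)=V(0)\setminus(V(0)\setminus U(0))$ is the complement of an open set inside the compact $V(0)$, hence closed in $V(0)$ and compact, and it is clearly open; this is precisely (2). For (1), apply this with $V(0)$ replaced by the open compact $V_0(0)$ constructed in the previous paragraph: then $V_0(0)\cap U(0)$ is an open compact neighbourhood of zero contained in $U(0)$, as required.
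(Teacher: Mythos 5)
Your proof is correct and rests on the same two facts as the paper's: every non-zero element is isolated (via the cited result that all shift-continuous topologies on $\mathscr{I\!\!P\!F}(\mathbb{N}^n)$ are discrete), and a compact set of isolated points must be finite. The only organizational difference is that you obtain (1) by intersecting $U(0)$ with a single pre-constructed open compact neighbourhood and invoking (2), whereas the paper gets (1) directly from regularity of the locally compact Hausdorff space by shrinking $U(0)$ to an open $V(0)$ with $\textrm{cl}(V(0))\subseteq U(0)$ and observing that the closure adds no new points; both routes are sound.
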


\begin{proof}[\textsl{Proof}]
$(1)$ Let $U(0)$ be an arbitrary open neighbourhood of the zero in $({\mathscr{I\!\!P\!F}(\mathbb{N}^n)}^0,\tau)$. By Theorem 3.3.1 from [10] the
space $({\mathscr{I\!\!P\!F}(\mathbb{N}^n)}^0,\tau)$ is regular. Since it is locally compact, there exists an open neighbourhood $V(0) \subseteq U(0)$
of the zero in $({\mathscr{I\!\!P\!F}(\mathbb{N}^n)}^0,\tau)$ such that ${cl_{{\mathscr{I\!\!P\!F}(\mathbb{N}^n)}^0} (V(0)) \subseteq U(0)}$. Since all non-zero elements of the semigroup ${\mathscr{I\!\!P\!F}(\mathbb{N}^n)}^0$ are isolated points in $({\mathscr{I\!\!P\!F}(\mathbb{N}^n)}^0,\tau)$, \break ${\textrm{cl}_{{\mathscr{I\!\!P\!F}(\mathbb{N}^n)}^0}(V(0))=V(0)}$, and hence our assertion holds.

\smallskip

$(2)$ Let $V(0)$ be an arbitrary compact open neighbourhood of the zero in $({\mathscr{I\!\!P\!F}(\mathbb{N}^n)}^0,\tau)$. Then for an arbitrary open neighbourhood $U(0)$ of the zero in $({\mathscr{I\!\!P\!F}(\mathbb{N}^n)}^0,\tau)$ the family
\begin{equation*}
\mathscr{U}=\{U(0)\}\cup\left\{\{x\} \colon  x\in V(0)\setminus U(0)\right\}
\end{equation*}
is an open cover of $V(0)$. Since the family $\mathscr{U}$ is disjoint, it is finite. So the set $V(0)\setminus U(0)$ is finite and hence the set $V(0)\cap U(0)$ is compact.
\end{proof}

%\begin{remark}\label{remark-1.2}
%In \cite{Gutik-Mokrytskyi-2019} it is proved that the semigroup $\mathscr{I\!\!P\!F}(\mathbb{N}^n)$ of all order isomorphisms between principal filters of the $n$-th power of the set of positive integers $\mathbb{N}$ with the product order is isomorphic to the semidirect product $\mathscr{S}_n\ltimes_{\Phi}{\mathscr{C}}(p,q)^n$ of the semigroup ${\mathscr{C}}(p,q)^n$ by the group  $\mathscr{S}_n$.
%We recall(cf. \cite{Gutik-Mokrytskyi-2019}) that the semigroup $\mathscr{I\!\!P\!F}(\mathbb{N}^n)$ is isomorphic to the semidirect product $\mathscr{S}_n\ltimes_{}{\mathscr{C}}(p,q)^n$ and the operation on it is determined in the following way: $$(\alpha, [\textbf{x},\textbf{y}])(\beta, [\textbf{u},\textbf{v}])=(\alpha\circ\beta, [(\textbf{x})\beta,(\textbf{y})\beta]*[\textbf{u},\textbf{v}])=$$ $$\\=(\alpha\circ\beta, [(\textbf{x})\beta + max\{(\textbf{y})\beta,\textbf{u}\} - (\textbf{y})\beta, \textbf{v} + max\{(\textbf{y})\beta,\textbf{u}\} - \textbf{u} ])$$

%\end{remark}

\begin{remark}\label{remark-2.17}
On the bicyclic semigroup ${\mathscr{C}}(p,q)$ the semigroup operation is determined in the following way:
\begin{equation*}
  p^iq^j\cdot p^kq^l=
\left\{
  \begin{array}{ll}
    p^iq^{j-k+l}, & \hbox{if~} j>k;\\
    p^iq^l,       & \hbox{if~} j=k;\\
    p^{i-j+k}q^l, & \hbox{if~} j<k,
  \end{array}
\right.
\end{equation*}
which is equivalent to the following multiplication:
\begin{equation*}
  p^iq^j\cdot p^kq^l=p^{i+\max\{j,k\}-j}q^{l+\max\{j,k\}-k}.
\end{equation*}
The above implies that the bicyclic semigroup ${\mathscr{C}}(p,q)$ is isomorphic to the semigroup $(\mathbb{N}_0\times \mathbb{N}_0,*)$ which is defined on the square $\mathbb{N}_0\times \mathbb{N}_0$ of the set of non-negative integers with the following multiplication:
\begin{equation}\label{eq-2.1}
  (i,j)*(k,l)=(i+\max\{j,k\}-j,l+\max\{j,k\}-k).
\end{equation}
\end{remark}

We note that the semigroup $(\mathbb{N}_0\times \mathbb{N}_0,*)$ is isomorphic to the semigroup $(\mathbb{N}\times \mathbb{N},*)$ which is defined on the square $\mathbb{N}\times \mathbb{N}$ of the set of all positive integers with the same operation $*$. It is obvious that the map $f\colon \mathbb{N}_0\times \mathbb{N}_0 \rightarrow \mathbb{N}\times \mathbb{N}$, $(i,j) \mapsto (i+1,j+1)$ is an isomorphism between semigroups $(\mathbb{N}_0\times \mathbb{N}_0,*)$ and $(\mathbb{N}\times \mathbb{N},*)$.

\smallskip

In this paper we will use the semigroup $(\mathbb{N}\times \mathbb{N},*)$ as a representation of the bicyclic semigroup ${\mathscr{C}}(p,q)$.

\smallskip

For an arbitrary positive integer $n$ by ${\mathscr{C}}(p,q)^n$ we shall denote the $n$-th direct power of $(\mathbb{N}\times \mathbb{N},*)$, i.e., ${\mathscr{C}}(p,q)^n$ is the $n$-th power of $\mathbb{N}\times \mathbb{N}$ with the point-wise semigroup operation defined by formula~\eqref{eq-2.1}. Also, by $[\mathbf{x},\mathbf{y}]$ we denote the ordered collection $\left((x_1,y_1),\dots,(x_n,y_n)\right)$ of ${\mathscr{C}}(p,q)^n$, where $\mathbf{x}=(x_1,\ldots,x_n)$ and $\mathbf{y}=(y_1,\ldots,y_n)$, and for arbitrary permutation $\sigma\colon\left\{1,\ldots,n\right\}\to\left\{1,\ldots,n\right\}$  we put
\begin{equation*}\label{eq-2.2}
(\mathbf{x})\sigma =\left(x_{(1)\sigma^{-1}},\ldots,x_{(n)\sigma^{-1}}\right).
\end{equation*}

We recall (cf. \cite{Gutik-Mokrytskyi-2019}) that the semigroup $\mathscr{I\!\!P\!F}(\mathbb{N}^n)$ is isomorphic to the semidirect product  \break $\mathscr{S}_n\ltimes_{}{\mathscr{C}}(p,q)^n$ and hence according the above arguments we can consider the semigroup $\mathscr{I\!\!P\!F}(\mathbb{N}^n)$ as the set $\mathscr{S}_n \times (\mathbb{N}\times \mathbb{N})^n$  with the following semigroup operation
\begin{equation*}
\begin{split}
  (\alpha, [\textbf{x},\textbf{y}])&\cdot(\beta, [\textbf{u},\textbf{v}]) =(\alpha\circ\beta, [(\textbf{x})\beta,(\textbf{y})\beta]\ast[\textbf{u},\textbf{v}])= \\
   = & \; (\alpha\circ\beta, [(\textbf{x})\beta + \max\{(\textbf{y})\beta,\textbf{u}\} - (\textbf{y})\beta, \textbf{v} + \max\{(\textbf{y})\beta,\textbf{u}\} - \textbf{u} ])
\end{split}
\end{equation*}

For any permutation $\sigma \in \mathscr{S}_n$ of an $n$-element set and for any ordered tuple ${\textbf{a}=(a_1,\ldots,a_n)\in \mathbb{N}^n}$ we put
\begin{equation*}
L_\sigma^\textbf{a}=\left\{(\sigma, [\textbf{a},\textbf{x}]) \in \mathscr{I\!\!P\!F}(\mathbb{N}^n) \colon \textbf{x}\in \mathbb{N}^n \right\}.
\end{equation*}

For any integer $i\in \{1,\ldots,n\}$ define an element $\textbf{2}_i$ as an element of $\mathbb{N}^n$ with the property that only $i$-th coordinate of $\textbf{2}_i$ is equal to $2$ and all other coordinate are equal to $1$, i.e. $\textbf{2}_i=(1,\ldots,\underbrace{2}_{i}, \ldots, 1)$.

\begin{lemma}\label{lemma-1.2}
Let $({\mathscr{I\!\!P\!F}(\mathbb{N}^n)}^0,\tau)$ be a locally compact non-discrete semitopological semigroup. Then for any neighborhood $U(0)$ of the zero $0$ and for any permutation $\sigma \in \mathscr{S}_n$ there exists $\textbf{a}\in \mathbb{N}^n$ such that the set $L_\sigma^\textbf{a} \cap U(0)$ is infinite.
\end{lemma}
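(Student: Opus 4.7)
The plan is to argue by contradiction: suppose there exist a neighbourhood $U(0)$ and a permutation $\sigma \in \mathscr{S}_n$ with $L_\sigma^{\mathbf{a}} \cap U(0)$ finite for every $\mathbf{a} \in \mathbb{N}^n$. By Lemma~\ref{lemma-1.1}(1) I first shrink $U(0)$ to a compact open neighbourhood $V(0) \subseteq U(0)$, retaining the finiteness of each $L_\sigma^{\mathbf{a}} \cap V(0)$.

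The first step is to propagate this finiteness from the chosen $\sigma$ to every permutation. Using the product formula together with the identities $(\mathbf{1})\alpha = \mathbf{1}$ and $\max\{\mathbf{1},\mathbf{a}\} = \mathbf{a}$, a direct calculation gives
\[
(\tau,[\mathbf{1},\mathbf{1}])\cdot(\alpha,[\mathbf{a},\mathbf{y}]) = (\tau\alpha,[\mathbf{a},\mathbf{y}]),
\]
so $\lambda_{(\tau,[\mathbf{1},\mathbf{1}])}$ is a continuous self-bijection of ${\mathscr{I\!\!P\!F}(\mathbb{N}^n)}^0$ whose continuous inverse is $\lambda_{(\tau^{-1},[\mathbf{1},\mathbf{1}])}$, i.e.\ a self-homeomorphism, and it carries $L_\alpha^{\mathbf{a}}$ bijectively onto $L_{\tau\alpha}^{\mathbf{a}}$. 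Its image $\lambda_{(\tau,[\mathbf{1},\mathbf{1}])}(V(0))$ is a compact open neighbourhood of the zero, so by Lemma~\ref{lemma-1.1}(2) it differs from $V(0)$ by finitely many points. Combining these two facts and letting $\tau$ range over $\mathscr{S}_n$, I conclude that $L_\beta^{\mathbf{a}} \cap V(0)$ is finite for \emph{every} $\beta \in \mathscr{S}_n$ and every $\mathbf{a}\in\mathbb{N}^n$.

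The main point is to upgrade this combinatorial finiteness to a contradiction with non-discreteness via a ``shift'' argument. I take $s = (e,[\mathbf{1},\mathbf{2}_1])$; since every $\mathbf{y} \in \mathbb{N}^n$ satisfies $\max\{\mathbf{y},\mathbf{1}\}=\mathbf{y}$, the product formula collapses to
\[
(\alpha,[\mathbf{a},\mathbf{y}])\cdot s = (\alpha,[\mathbf{a},\mathbf{y}+\mathbf{2}_1-\mathbf{1}]),
\]
so right translation by $s$ merely increases the first coordinate of $\mathbf{y}$ by $1$, leaving $\alpha$ and $\mathbf{a}$ fixed. Continuity of $\rho_s$ and Lemma~\ref{lemma-1.1}(2) produce a finite ``exceptional set'' $F := V(0)\setminus\rho_s^{-1}(V(0))$. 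For every nonempty (and by the previous step finite) $L_\alpha^{\mathbf{a}} \cap V(0)$, the first coordinates of the second tuples attain a maximum; any witness $(\alpha,[\mathbf{a},\mathbf{y}^*])$ of that maximum satisfies $(\alpha,[\mathbf{a},\mathbf{y}^*+\mathbf{2}_1-\mathbf{1}])\notin V(0)$, and therefore lies in $F$. Since distinct $L$-classes are disjoint, this injects the family of nonempty $L_\alpha^{\mathbf{a}}\cap V(0)$ into $F$, so only finitely many $L$-classes meet $V(0)$.

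Combining the two steps, $V(0)\setminus\{0\}$ is a finite union of finite sets, hence finite; then $\{0\}$ is open in ${\mathscr{I\!\!P\!F}(\mathbb{N}^n)}^0$, which together with the isolation of every nonzero point forces the whole space to be discrete, contradicting the hypothesis. The part I expect to require the most ingenuity is the choice of the shift element $s = (e,[\mathbf{1},\mathbf{2}_1])$: it is the unique ``minimal'' element for which right translation reduces to the pure coordinate shift $\mathbf{y}\mapsto\mathbf{y}+\mathbf{2}_1-\mathbf{1}$, making each finite row $L_\alpha^{\mathbf{a}} \cap V(0)$ contribute a point to the finite obstruction set $F$.
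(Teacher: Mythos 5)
Your proof is correct, and although it is built around the same key mechanism as the paper's --- right translation by $(1,[\mathbf{1},\mathbf{2}_1])$, which sends $(\alpha,[\mathbf{a},\mathbf{y}])$ to $(\alpha,[\mathbf{a},\mathbf{y}+\mathbf{2}_1-\mathbf{1}])$, combined with Lemma~\ref{lemma-1.1}(2) and the observation that an element of a finite row with maximal first coordinate is pushed out of the neighbourhood --- the global architecture is genuinely different. The paper stays with the single bad permutation $\sigma$: it chooses for each $\mathbf{a}$ one escaping witness $m_{\mathbf{a}}\in L_\sigma^{\mathbf{a}}\cap U(0)$ and obtains an infinite set $M\subset U(0)\setminus V(0)$, contradicting Lemma~\ref{lemma-1.1}(2) directly (this tacitly assumes $L_\sigma^{\mathbf{a}}\cap U(0)\neq\varnothing$ for infinitely many $\mathbf{a}$, a case distinction your argument does not need). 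You instead first propagate the finiteness hypothesis to all permutations via left translation by the unit $(\tau,[\mathbf{1},\mathbf{1}])$ --- a correct computation, and the resulting compact open image is legitimately compared with $V(0)$ through Lemma~\ref{lemma-1.1}(2) applied in both directions --- then use the finite exceptional set $F=V(0)\setminus\rho_s^{-1}(V(0))$ to show only finitely many rows meet $V(0)$, conclude that $V(0)\setminus\{0\}$ is finite, and contradict non-discreteness via the isolation of the zero. Your route is longer (the permutation-propagation step is not needed in the paper's version) but buys robustness: empty rows are handled automatically, and you in fact establish the stronger intermediate fact that the contradiction hypothesis would force $0$ to be an isolated point.
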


\begin{proof}[\textsl{Proof}]
Suppose to the contrary that there exists neighborhood $U(0)$ of the zero $0$ and permutation $\sigma \in \mathscr{S}_n$ such that for any $\textbf{a}\in \mathbb{N}^n$ the set $L_\sigma^\textbf{a} \cap U(0)$ is finite. Then Lemma~\ref{lemma-1.1}(1) and the separate continuity of the semigroup operation in $({\mathscr{I\!\!P\!F}(\mathbb{N}^n)}^0,\tau)$ imply that there exists an open compact neighbourhood $V(0)$ of the zero $0$ in $({\mathscr{I\!\!P\!F}(\mathbb{N}^n)}^0,\tau)$ such that $V(0) \cdot (1, [\textbf{1}, \textbf{2}_{1}])\subset U(0)$.

\smallskip

Since for any fixed element $\textbf{a}\in \mathbb{N}^n$ the set $L_\sigma^\textbf{a} \cap U(0)$ is finite, there exists an element $$m_\textbf{a}=(\sigma, [\textbf{a}, (x_1,\ldots,x_n)]) \in L_\sigma^\textbf{a} \cap U(0)$$
with property that
\begin{equation}\label{equation-1}
U(0) \not\owns (\sigma, [\textbf{a}, (x_1+1,\ldots,x_n)]) = m_\textbf{a}\cdot (1, [\textbf{1}, \textbf{2}_{1}]).
\end{equation}

Consider the set $M=\left\{m_\textbf{a} \colon \textbf{a}\in \mathbb{N}^n \right\}$. Then property (\ref{equation-1}) implies that $M\cap V(0)=\varnothing$. Thus ${U(0)\setminus V(0) \supset M}$ which contradicts Lemma ~\ref{lemma-1.1}(2) because the set $M$ is infinite.
\end{proof}

\begin{lemma}\label{lemma-1.3}
Let $n$ be a positive integer, $A$ and $B$ be infinite subsets of $\mathbb{N}^n$ such that $A\sqcup B=\mathbb{N}^n$ and $A\cap B=\emptyset$. Then there exist an infinite subset $C\subset A$ and a positive integer $k\in \{1,\ldots,n\}$ such that at least one of this two sets $(C)g_k$ and $(C)g^{-1}_k$ %the image $(C)g_k$ of the set $C$ under the map $g_k$ is lying in the set $B$ or the image $(C)g^{-1}_k$ of the set $C$ under its converse map $g^{-1}_k$
is a subset of $B$, where $g_k$ is the map from $\mathbb{N}^n$ to $\mathbb{N}^n$ defined in the following way: ${(x_1,\ldots,x_n)g_k=(x_1,\ldots,x_k+1,\ldots,x_n)}$.
\end{lemma}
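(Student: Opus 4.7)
The plan is to view $\mathbb{N}^n$ as a graph whose edges are unordered pairs $\{x,g_k(x)\}$ for $x \in \mathbb{N}^n$ and $k \in \{1,\ldots,n\}$, and to call such an edge \emph{mixed} whenever its two endpoints lie on opposite sides of the partition $\mathbb{N}^n = A \sqcup B$. Every mixed edge has a unique $A$-endpoint together with a well-defined \emph{direction} $(k,\varepsilon) \in \{1,\ldots,n\}\times\{+,-\}$ which records where its $B$-endpoint sits relative to the $A$-endpoint. Only $2n$ such directions are available, so the instant we know that mixed edges are infinite in number, the pigeonhole principle supplies an infinite family of them sharing one direction $(k_0,\varepsilon_0)$. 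Declaring $C$ to be the set of $A$-endpoints of this family produces an infinite $C\subseteq A$ satisfying $(C)g_{k_0}\subseteq B$ when $\varepsilon_0=+$ and $(C)g_{k_0}^{-1}\subseteq B$ when $\varepsilon_0=-$, as demanded.

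The real work is therefore in establishing that the collection of mixed edges is infinite. I would argue by contradiction: suppose it is finite. Then the vertices incident to some mixed edge form a finite set, which fits inside a cube $[1,N]^n$ for some $N$. Put $R=\mathbb{N}^n\setminus[1,N]^n$. By assumption every $x\in R$ has all its neighbours on the same side of the partition as itself, so the subgraph of $\mathbb{N}^n$ induced on $R$ contains no mixed edges at all. Since $R$ is connected in $\mathbb{N}^n$ (from any $x\in R$ one can first raise each coordinate $x_i$ up to $\max\{x_i,N+1\}$ without ever entering $[1,N]^n$, and then drop the excess coordinates down to $(N+1,\ldots,N+1)$, remaining inside $R$ throughout), the whole of $R$ must be contained in either $A$ or $B$. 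But then whichever of $A$, $B$ fails to contain $R$ is forced to live inside the finite cube $[1,N]^n$, contradicting the hypothesis that both $A$ and $B$ are infinite.

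The main obstacle, and in fact the only substantive step, is this isoperimetric-style assertion that the $A$/$B$ boundary in $\mathbb{N}^n$ cannot be finite. Once that is in hand, the remaining extraction of $C$ is a short pigeonhole on the $2n$ directions. The auxiliary connectedness claim for $R$ is geometrically transparent—immediate for $n=1$ and handled by the two-step coordinate adjustment above for $n\geq 2$—and no topology on $\mathscr{I\!\!P\!F}(\mathbb{N}^n)^0$ enters the argument; the lemma is purely a combinatorial statement about partitions of the integer lattice.
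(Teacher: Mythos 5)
Your proof is correct, and for the crucial step it takes a genuinely different route from the paper. Both arguments reduce the lemma to producing infinitely many adjacent pairs $(\mathbf{a},\mathbf{b})$ with $\mathbf{a}\in A$, $\mathbf{b}\in B$ differing by one unit in one coordinate, and both finish with the same pigeonhole over the $2n$ possible directions (note, as you implicitly use, that for a fixed direction the map sending a mixed edge to its $A$-endpoint is injective, so the extracted $C$ really is infinite). Where you diverge is in how the infinitude of the boundary is established. The paper argues constructively and inductively: at stage $i$ it picks fresh points $\mathbf{a}^i\in A$ and $\mathbf{b}^i\in B$ outside the down-set $\downarrow(A_{i-1}\cup B_{i-1})$ of the previously found boundary points, joins them by a lattice path avoiding that finite set, and harvests the $A$-to-$B$ transition on that path; it also treats $n=1$ by a separate explicit formula. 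You instead argue by contradiction: if only finitely many mixed edges existed they would all sit inside a cube $[1,N]^n$, and the connectedness of $R=\mathbb{N}^n\setminus[1,N]^n$ (via your two-stage coordinate adjustment, which correctly keeps the walk inside $R$) would force $R$ to be monochromatic, trapping one of the two infinite sets inside a finite cube. Your version is shorter, handles all $n$ uniformly without a case split, and avoids having to justify the existence of paths dodging the previously chosen points, at the cost of being non-constructive; the paper's version explicitly exhibits the boundary pairs. Either argument suffices for the application in Lemma~\ref{lemma-1.5}.
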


\begin{proof}[\textsl{Proof}]

If $n=1$ consider the the set $C = \{a \in A \colon a+1 \in B \}$, $C$ is infinite and $(C)g_1 \subset B$.

\smallskip

Let $n \geq 2$.
An ordered tuple $p=\left(\textbf{p}^1,\textbf{p}^2,\ldots,\textbf{p}^{r-1},\textbf{p}^r\right) \in (\mathbb{N}^n)^r$ of elements of $\mathbb{N}^n$ is called a \emph{path from point $\textbf{a}$ to point $\textbf{b}$} if $\textbf{p}^1~=~\textbf{a}$, $\textbf{p}^k=\textbf{b}$ and for any index $i\in \{2,\ldots,k\}$ there exist some $m_i\in \{1,\ldots,n\}$ such that $(\textbf{p}^{i-1})g_{m_i}=\textbf{p}^{i}$ or $(\textbf{p}^{i-1})g^{-1}_{m_i}=\textbf{p}^{i}$.

\smallskip

For any $X \subset \mathbb{N}^n$ we denote
\begin{equation*}
\downarrow X = \left\{\textbf{a}\in \mathbb{N}^n \colon \hbox{there xists~} \textbf{x}\in X \hbox{~such that~} \textbf{a}\leqslant \textbf{x} \right\}.
\end{equation*}

Put $A_0=B_0=\varnothing$.
For any $i\geq1$ choose elements ${\textbf{a}^i\in A\setminus \downarrow(A_{i-1} \cup B_{i-1})}$ and \break${\textbf{b}^i\in B\setminus \downarrow(A_{i-1} \cup B_{i-1})}$ and choose a path $p_i=(\textbf{p}^{1},\ldots, \textbf{p}^{k})$ from $\textbf{a}^i$ to $\textbf{b}^i$ with property that all $\textbf{p}^j \notin A_{i-1}\cup B_{i-1}$. By choosing the path $p_i$, there exists point $\textbf{p}^j$ of this path such that $\textbf{p}^j\in A$ and $\textbf{p}^{j+1}\in B$, so define the sets $A_i=A_{i-1}\cup\{\textbf{p}^j\}$ and $B_i=B_{i-1}\cup\{\textbf{p}^{j+1}\}$.

Next, we define $\tilde{C} =\displaystyle \bigcup\limits_{i=1}^{\infty} A_i$. We remark that for any $\textbf{a}\in \tilde{C}$ there exist $k\in\{1,\ldots,n\}$ and $s\in\{1,-1\}$ such that $(\textbf{a})g^{s}_{k}\in \displaystyle \bigcup\limits_{i=1}^{\infty} B_i\subset B$, denote this numbers by $k_\textbf{a}$ and $s_\textbf{a}$, respectively. Since the set $\tilde{C}$ is infinite there exist infinite subset $C\subset\tilde{C}$ such that for any $\textbf{c}\in \tilde{C}$ the numbers $k_\textbf{c}$ and $s_\textbf{c}$ is the same.
\end{proof}

\begin{lemma}\label{lemma-1.5}
Let $({\mathscr{I\!\!P\!F}(\mathbb{N}^n)}^0,\tau)$ be a locally compact non-discrete semitopological semigroup. Then for any neighborhood $U(0)$ of the zero $0$ and for any permutation $\sigma \in \mathscr{S}_n$ there exist $\textbf{a}\in \mathbb{N}^n$ such that the set $L_\sigma^\textbf{a} \setminus U(0)$ is finite.
\end{lemma}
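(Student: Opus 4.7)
My plan is to argue by contradiction, combining Lemmas~\ref{lemma-1.1}, \ref{lemma-1.2}, and~\ref{lemma-1.3}. Suppose some neighbourhood $U(0)$ and some $\sigma\in\mathscr{S}_n$ violate the conclusion, so that $L_\sigma^{\textbf{a}}\setminus U(0)$ is infinite for every $\textbf{a}\in\mathbb{N}^n$. Using Lemma~\ref{lemma-1.1}(1) I shrink $U(0)$ to an open compact neighbourhood $V(0)\subseteq U(0)$ of zero; since $V(0)\subseteq U(0)$, the set $L_\sigma^{\textbf{a}}\setminus V(0)$ is still infinite for every $\textbf{a}$. By Lemma~\ref{lemma-1.2} applied to $V(0)$, there is some $\textbf{a}_0\in\mathbb{N}^n$ such that $L_\sigma^{\textbf{a}_0}\cap V(0)$ is also infinite. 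Setting $A=\{\textbf{x}\in\mathbb{N}^n\colon(\sigma,[\textbf{a}_0,\textbf{x}])\in V(0)\}$ and $B=\mathbb{N}^n\setminus A$, both $A$ and $B$ are infinite.

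Lemma~\ref{lemma-1.3} then yields an infinite $C\subseteq A$, an index $k\in\{1,\ldots,n\}$, and a sign $s\in\{1,-1\}$ with $(C)g_k^{s}\subseteq B$. The key calculation is to identify an element $e\in\mathscr{I\!\!P\!F}(\mathbb{N}^n)$ such that right multiplication by $e$, restricted to the row $L_\sigma^{\textbf{a}_0}$, realises the shift $\textbf{x}\mapsto(\textbf{x})g_k^{s}$. A direct check from the semidirect-product multiplication formula shows that one may take $e=(1,[\textbf{1},\textbf{2}_k])$ when $s=1$ and $e=(1,[\textbf{2}_k,\textbf{1}])$ when $s=-1$, so that $(\sigma,[\textbf{a}_0,\textbf{c}])\cdot e=(\sigma,[\textbf{a}_0,(\textbf{c})g_k^{s}])$ for every $\textbf{c}\in C$.

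With $e$ in hand I close via separate continuity and Lemma~\ref{lemma-1.1}(2). Since $0\cdot e=0\in V(0)$, the set $W(0)=\{y\colon y\cdot e\in V(0)\}$ is an open neighbourhood of the zero, so $V(0)\setminus W(0)$ is finite. However, for every $\textbf{c}\in C$ the point $(\sigma,[\textbf{a}_0,\textbf{c}])$ lies in $V(0)$ (because $\textbf{c}\in A$), while its image $(\sigma,[\textbf{a}_0,\textbf{c}])\cdot e=(\sigma,[\textbf{a}_0,(\textbf{c})g_k^{s}])$ is not in $V(0)$ (because $(\textbf{c})g_k^{s}\in B$); thus the infinitely many points $\{(\sigma,[\textbf{a}_0,\textbf{c}])\colon\textbf{c}\in C\}$ all lie in $V(0)\setminus W(0)$, a contradiction.

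The step I expect to be most delicate is the case $s=-1$: the element $(1,[\textbf{1},\textbf{2}_k])$ only \emph{increases} the second tuple of $[\textbf{a}_0,\textbf{x}]$, so one has to locate the dual element $(1,[\textbf{2}_k,\textbf{1}])$ that \emph{decreases} it while keeping the first tuple equal to $\textbf{a}_0$ (so that the product stays inside the row $L_\sigma^{\textbf{a}_0}$). For this to succeed one needs $c_k\ge 2$ for all $\textbf{c}\in C$, but this is automatic since Lemma~\ref{lemma-1.3} requires $(\textbf{c})g_k^{-1}\in\mathbb{N}^n$.
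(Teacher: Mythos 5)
Your argument is correct and follows essentially the same route as the paper: contradiction via Lemma~\ref{lemma-1.2}, the combinatorial Lemma~\ref{lemma-1.3} applied to the partition of a row $L_\sigma^{\textbf{a}}$, realisation of the shifts $g_k^{\pm1}$ as right translations by $(1,[\textbf{1},\textbf{2}_k])$ and $(1,[\textbf{2}_k,\textbf{1}])$, and a final appeal to Lemma~\ref{lemma-1.1}(2); you also correctly flag the point $c_k\ge 2$ needed for the $s=-1$ case. The only (cosmetic) difference is that you work with the preimage $W(0)=\rho_e^{-1}(V(0))$ so that Lemma~\ref{lemma-1.1}(2) applies verbatim to $V(0)\setminus W(0)$, whereas the paper chooses $V(0)$ with $V(0)\cdot e\subseteq U(0)$ and places $C$ in $U(0)\setminus V(0)$.
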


\begin{proof}[\textsl{Proof}]
Fix any neighborhood $U(0)$ of the zero $0$ and any permutation $\sigma \in \mathscr{S}_n$. Lemma~\ref{lemma-1.2} implies that there exist $\textbf{a}\in \mathbb{N}^n$ such that the set $L_\sigma^\textbf{a} \cap U(0)$ is infinite.

\smallskip

The set $L_\sigma^\textbf{a}$ is a disjoint union: $L_\sigma^\textbf{a} = (L_\sigma^\textbf{a}\cap U(0)) \sqcup (L_\sigma^\textbf{a}\setminus U(0))$. The statements of the lemma would be proved when the set $L_\sigma^\textbf{a}\setminus U(0)$ is finite, and hence we assuming the opposite that the set $L_\sigma^\textbf{a}\setminus U(0)$ is infinite.

\smallskip

We consider the bijection $f_{\sigma}^{a}\colon L_\sigma^\textbf{a} \rightarrow \mathbb{N}^n$ defined by the formula
\begin{equation*}
(\sigma, [\textbf{a},\textbf{x}])f_{\sigma}^{a}=\textbf{x}.
\end{equation*}
Lemma~\ref{lemma-1.3} implies that in the set $L_\sigma^\textbf{a}\cap U(0)$ there exists an infinite subset $C$ and an integer number $k\in\{1,\ldots,n\}$ such that at least one of two sets $(C)(f_{\sigma}^{a}\circ g_k)$ and $(C)(f_{\sigma}^{a}\circ g^{-1}_k)$ is a subset of $(L_\sigma^\textbf{a}\setminus U(0))f_{\sigma}^{a}$.

\smallskip

We remark that the composition $f_{\sigma}^{a}\circ g_k\circ (f_{\sigma}^{a})^{-1}$ coincide with the restriction of right translation
$\rho_{(1,[\textbf{0},\textbf{1}_k])}$ to the set $L_\sigma^\textbf{a}$, i.e.,
$$f_{\sigma}^{a}\circ g_k\circ (f_{\sigma}^{a})^{-1}=\rho_{(1,[\textbf{1},\textbf{2}_k])}{\big|}_{L_\sigma^\textbf{a}}$$
and similarly
$$f_{\sigma}^{a}\circ g_k^{-1}\circ (f_{\sigma}^{a})^{-1}=\rho_{(1,[\textbf{2}_k,\textbf{1}])}{\big|}_{L_\sigma^\textbf{a}\setminus \{(\sigma,[\textbf{a},\textbf{x}])  \colon \textbf{x}\in\mathbb{N}^n, \; x_k=2  \}}$$

Lemma~\ref{lemma-1.1} and the separate continuity of the semigroup operation in $({\mathscr{I\!\!P\!F}(\mathbb{N}^n)}^0,\tau)$ imply that there exists an open compact neighbourhood $V(0)$ of the zero $0$ in $({\mathscr{I\!\!P\!F}(\mathbb{N}^n)}^0,\tau)$ such that \break $V~\cdot~(1, [\textbf{1}, \textbf{2}_k])~\subset~U(0)$ and $V~\cdot~(1, [\textbf{2}_k, \textbf{1}])~\subset~ U(0)$.

In any case we have that the set $C$ is a subset of $U(0)\setminus V(0)$. Indeed:
\begin{itemize}
  \item[$(i)$] if $(C)(f_{\sigma}^{a}\circ g_k)$ is subset of $(L_\sigma^\textbf{a}\setminus U(0))f_{\sigma}^{a}$ then we have that
\end{itemize}
\begin{equation*}
\begin{split}
  C~\cdot~(1, [\textbf{1}, \textbf{2}_k]) & =(C)\rho_{(1,[\textbf{1},\textbf{2}_k])}= \\
    & =(C)\rho_{(1,[\textbf{1},\textbf{2}_k])}{\big|}_{L_\sigma^\textbf{a}}=\\
    & =(C)(f_{\sigma}^{a}\circ g_k\circ (f_{\sigma}^{a})^{-1}) \subset\\
    & \subset L_\sigma^\textbf{a}\setminus U(0);
\end{split}
\end{equation*}

\begin{itemize}
  \item[$(ii)$] if $(C)(f_{\sigma}^{a}\circ g^{-1}_k)$ is subset of $(L_\sigma^\textbf{a}\setminus U(0))f$ then we have that
\end{itemize}
\begin{equation*}
\begin{split}
  C~\cdot~(1, [\textbf{2}_k, \textbf{1}]) & =(C)\rho_{(1,[\textbf{2}_k, \textbf{1}])}= \\
    & =(C)\rho_{(1,[\textbf{2}_k,\textbf{1}])}{\big|}_{L_\sigma^\textbf{a}\setminus \{(\sigma,[\textbf{a},\textbf{x}]) \hbox{ } | \hbox{ } \textbf{x}\in\mathbb{N}^n, \hbox{ } x_k=2  \}}=\\
    & = (C)(f_{\sigma}^{a}\circ g^{-1}_k\circ (f_{\sigma}^{a})^{-1}) \subset \\
    & \subset L_\sigma^\textbf{a}\setminus U(0);
\end{split}
\end{equation*}
and since $C$ is an infinite set this contradicts Lemma~\ref{lemma-1.1}(2).
\end{proof}

\begin{lemma}\label{lemma-1.6}
Let $({\mathscr{I\!\!P\!F}(\mathbb{N}^n)}^0,\tau)$ be a locally compact non-discrete semitopological semigroup. Then for any neighborhood $U(0)$ of the zero $0$, any permutation $\sigma \in \mathscr{S}_n$ and any element $\textbf{a}\in \mathbb{N}^n$ the set $L_\sigma^\textbf{a} \setminus U(0)$ is finite.
\end{lemma}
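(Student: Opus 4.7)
Fix $\sigma\in\mathscr{S}_n$, $\mathbf{b}\in\mathbb{N}^n$, and an arbitrary open neighbourhood $U(0)$ of the zero. The strategy is to exhibit a single element $\ell$ of $\mathscr{I\!\!P\!F}(\mathbb{N}^n)$ whose left translation $\lambda_\ell$ restricts to a bijection $L_{1}^{\mathbf{a}}\to L_{\sigma}^{\mathbf{b}}$ that is the identity on the $\mathbf{x}$-parameter, so that the conclusion of Lemma~\ref{lemma-1.5} for the identity permutation transports to the desired uniform conclusion for $\sigma$ and $\mathbf{b}$. Concretely, by Lemma~\ref{lemma-1.1}(1) I first shrink $U(0)$ to an open compact neighbourhood $V(0)\subseteq U(0)$; Lemma~\ref{lemma-1.5} applied to $V(0)$ and the identity permutation $1$ then yields some $\mathbf{a}\in\mathbb{N}^n$ such that $L_{1}^{\mathbf{a}}\setminus V(0)$ is finite.

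The key computation is to take $\ell:=(\sigma,[\mathbf{b},\mathbf{a}])$. Directly from the displayed multiplication rule of $\mathscr{I\!\!P\!F}(\mathbb{N}^n)$, for every $\mathbf{x}\in\mathbb{N}^n$,
\[ (\sigma,[\mathbf{b},\mathbf{a}])\cdot(1,[\mathbf{a},\mathbf{x}]) \;=\; (\sigma,[\mathbf{b}+\max\{\mathbf{a},\mathbf{a}\}-\mathbf{a},\; \mathbf{x}+\max\{\mathbf{a},\mathbf{a}\}-\mathbf{a}]) \;=\; (\sigma,[\mathbf{b},\mathbf{x}]), \]
so $\lambda_\ell|_{L_{1}^{\mathbf{a}}}$ is an $\mathbf{x}$-preserving bijection onto $L_{\sigma}^{\mathbf{b}}$. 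By separate continuity, $\lambda_\ell$ is continuous, and since $\lambda_\ell(0)=0\in U(0)$ the set $W:=\lambda_\ell^{-1}(U(0))$ is an open neighbourhood of the zero.

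To finish, Lemma~\ref{lemma-1.1}(2) applied to the compact open $V(0)$ and to the open $W$ gives that $V(0)\setminus W$ is finite, while the choice of $\mathbf{a}$ makes $L_{1}^{\mathbf{a}}\setminus V(0)$ finite. From the inclusion
\[ L_{1}^{\mathbf{a}}\setminus W \;\subseteq\; \bigl(L_{1}^{\mathbf{a}}\setminus V(0)\bigr)\cup\bigl(V(0)\setminus W\bigr) \]
it follows that $L_{1}^{\mathbf{a}}\setminus W$ is finite, and the bijection $\lambda_\ell|_{L_{1}^{\mathbf{a}}}$ maps this set onto $L_{\sigma}^{\mathbf{b}}\setminus U(0)$, which is therefore finite. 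The main obstacle is identifying the transporter $\ell$: once one notices that $\ell=(\sigma,[\mathbf{b},\mathbf{a}])$ produces an $\mathbf{x}$-preserving bijection $L_{1}^{\mathbf{a}}\to L_{\sigma}^{\mathbf{b}}$, the remainder is a routine combination of Lemmas~\ref{lemma-1.1}(2) and~\ref{lemma-1.5}.
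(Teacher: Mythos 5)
Your proof is correct and follows essentially the same route as the paper: both obtain a single row with finite complement from Lemma~\ref{lemma-1.5} and then transport that finiteness to an arbitrary row by a left translation, using separate continuity together with Lemma~\ref{lemma-1.1}(2). Your transporter $\ell=(\sigma,[\mathbf{b},\mathbf{a}])$, which carries $L_{1}^{\mathbf{a}}$ onto $L_{\sigma}^{\mathbf{b}}$ preserving the $\mathbf{x}$-parameter, is in fact a cleaner choice than the paper's, which stays within the fixed permutation $\sigma$ and must construct the pair $\mathbf{p},\mathbf{q}$ via a coordinatewise case analysis.
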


\begin{proof}[\textsl{Proof}]
Consider any neighborhood $U(0)$ of the zero $0$ and any permutation $\sigma \in \mathscr{S}_n$. Lemma~\ref{lemma-1.5} implies that there exists $\textbf{b}\in \mathbb{N}^n$ such that the set $L_\sigma^{\textbf{b}} \setminus U(0)$ is finite. Fix any $\textbf{a}\in \mathbb{N}^n\setminus \{\textbf{b}\}$. Define elements $\textbf{q},\textbf{p}\in \mathbb{N}^n$ in the following way: for any $i\in \{1,\ldotp,n\}$ put
$$q_i=1, \quad p_i=b_i-a_i, \qquad \hbox{if} \quad  b_i\geq a_i;$$
$$p_i=1, \quad q_i=a_i-b_i, \qquad \hbox{if} \quad b_i<a_i.$$
We remark that $\textbf{q}-\textbf{p}=\textbf{a}-\textbf{b}$ and $\max\{\textbf{p},\textbf{b}\} = \textbf{b}$. Then, the restriction of the left translation $\lambda_{(1,[(\textbf{q})\sigma^{-1},(\textbf{p})\sigma^{-1}])}$ on the set $L_\sigma^\textbf{b}$ is a bijection between $L_\sigma^\textbf{b}$ and $L_\sigma^\textbf{a}$:
%\newline
for any $(\sigma,[\textbf{b},\textbf{x}])\in L_\sigma^\textbf{b}$ we have that
\begin{equation*}
\begin{split}
  (\sigma,[\textbf{b},\textbf{x}])\lambda_{(1,[(\textbf{q})\sigma^{-1},(\textbf{p})\sigma^{-1}])} & =(1,[(\textbf{q})\sigma^{-1},(\textbf{p})\sigma^{-1}])\cdot(\sigma,[\textbf{b},\textbf{x}])= \\
    & =(\sigma, [\textbf{q},\textbf{p}]\ast[\textbf{b},\textbf{x}])=\\
    & =(\sigma, [\max\{\textbf{p},\textbf{b}\}-\textbf{p}+\textbf{q},\max\{\textbf{p},\textbf{b}\}-\textbf{b}+\textbf{x}])=\\
    & =(\sigma, [\textbf{b}-\textbf{p}+\textbf{q},\textbf{x}])=(\sigma, [\textbf{a},\textbf{x}]).
\end{split}
\end{equation*}
Lemma~\ref{lemma-1.1} and the separate continuity of the semigroup operation in $({\mathscr{I\!\!P\!F}(\mathbb{N}^n)}^0,\tau)$ imply that there exist an open compact neighbourhood $V(0)$ of the zero $0$ in $({\mathscr{I\!\!P\!F}(\mathbb{N}^n)}^0,\tau)$ \break such that $(1,[(\textbf{q})\sigma^{-1},(\textbf{p})\sigma^{-1}])~\cdot~V(0)~\subset~U(0)$.
Since
\begin{equation*}
\begin{split}
  L_\sigma^\textbf{a}\setminus U(0) & \subseteq L_\sigma^\textbf{a}\setminus (1,[(\textbf{q})\sigma^{-1},(\textbf{p})\sigma^{-1}])~\cdot~V(0)= \\
    & =L_\sigma^\textbf{a} \setminus (V(0))\lambda_{(1,[(\textbf{q})\sigma^{-1},(\textbf{p})\sigma^{-1}])}=\\
    & =(L_\sigma^\textbf{b}\setminus V(0))\lambda_{(1,[(\textbf{q})\sigma^{-1},(\textbf{p})\sigma^{-1}])},
\end{split}
\end{equation*}
$L_\sigma^\textbf{a}\setminus U(0)$ is finite, because the set $L_\sigma^\textbf{b}\setminus V(0)$ is finite.
\end{proof}

\begin{lemma}\label{lemma-1.7}
Let $({\mathscr{I\!\!P\!F}(\mathbb{N}^n)}^0,\tau)$ be a locally compact non-discrete semitopological semigroup. Then for any neighborhood $U(0)$ of the zero $0$ and for any permutation $\sigma \in \mathscr{S}_n$ there exist only finite number of elements $\textbf{a}\in \mathbb{N}^n$ such that the set $L_\sigma^\textbf{a} \setminus U(0)$ is non empty, i.e. the set $\left\{\textbf{a}\in \mathbb{N}^n \colon L_\sigma^\textbf{a} \setminus U(0) \neq \varnothing\right\}$ is finite.
\end{lemma}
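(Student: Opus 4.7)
The plan is to argue by contradiction: suppose the set $E=\{\textbf{a}\in\mathbb{N}^n: L_\sigma^\textbf{a}\setminus U(0)\neq\varnothing\}$ is infinite. Because shrinking $U(0)$ only enlarges $E$, we may invoke Lemma~\ref{lemma-1.1}$(1)$ to replace $U(0)$ by an open compact neighbourhood contained in it, and so assume from the outset that $U(0)$ itself is open and compact. The idea, in the spirit of Lemma~\ref{lemma-1.2}, is to construct for each $\textbf{a}\in E$ a ``boundary'' element $m_\textbf{a}\in U(0)$ whose image under one fixed right translation is forced outside $U(0)$; separate continuity will then squeeze the infinite family $\{m_\textbf{a}\}$ into a set of the form $U(0)\setminus V(0)$, and Lemma~\ref{lemma-1.1}$(2)$, applied with $U(0)$ playing the compact-open role and $V(0)$ the open role, will make that set finite.

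To produce $m_\textbf{a}$, first invoke Lemma~\ref{lemma-1.6} to see that $F_\textbf{a}=\{\textbf{x}\in\mathbb{N}^n: (\sigma,[\textbf{a},\textbf{x}])\notin U(0)\}$ is finite, and by definition of $E$ non-empty. Pick $\textbf{y}_\textbf{a}\in F_\textbf{a}$ with $y_{\textbf{a},1}$ as large as possible and set $\textbf{z}_\textbf{a}=(y_{\textbf{a},1}+1, y_{\textbf{a},2},\ldots,y_{\textbf{a},n})$. Because $z_{\textbf{a},1}$ strictly exceeds every first coordinate occurring in $F_\textbf{a}$, we get $\textbf{z}_\textbf{a}\notin F_\textbf{a}$, so $m_\textbf{a}:=(\sigma,[\textbf{a},\textbf{z}_\textbf{a}])$ lies in $U(0)$. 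Since $z_{\textbf{a},1}\geq 2$, so that $\max\{\textbf{z}_\textbf{a},\textbf{2}_1\}=\textbf{z}_\textbf{a}$, the multiplication formula for $\mathscr{I\!\!P\!F}(\mathbb{N}^n)$ yields the key identity
\begin{equation*}
m_\textbf{a}\cdot(1,[\textbf{2}_1,\textbf{1}])=(\sigma,[\textbf{a},\textbf{y}_\textbf{a}]),
\end{equation*}
and this element lies outside $U(0)$ by the very choice of $\textbf{y}_\textbf{a}\in F_\textbf{a}$.

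To finish, separate continuity of right multiplication by $(1,[\textbf{2}_1,\textbf{1}])$ together with Lemma~\ref{lemma-1.1}$(1)$ produces an open compact neighbourhood $V(0)$ of the zero with $V(0)\cdot(1,[\textbf{2}_1,\textbf{1}])\subseteq U(0)$. If any $m_\textbf{a}$ lay in $V(0)$, the displayed identity would give $m_\textbf{a}\cdot(1,[\textbf{2}_1,\textbf{1}])\in U(0)$, a contradiction; hence $m_\textbf{a}\in U(0)\setminus V(0)$ for every $\textbf{a}\in E$. Distinct $\textbf{a}$'s give distinct first vector components and therefore distinct $m_\textbf{a}$, so $\{m_\textbf{a}\}$ is an infinite subset of $U(0)\setminus V(0)$; Lemma~\ref{lemma-1.1}$(2)$, with $U(0)$ as the compact-open set and $V(0)$ as the open set, now forces $U(0)\setminus V(0)$ to be finite, which is the contradiction. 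The only delicate point is lining up the translator $(1,[\textbf{2}_1,\textbf{1}])$---whose right action decrements the first coordinate of the second vector component whenever that coordinate is at least $2$---with the maximum-first-coordinate choice of $\textbf{y}_\textbf{a}$ within the finite set $F_\textbf{a}$, so that the right-shift of $m_\textbf{a}\in U(0)$ lands exactly inside $L_\sigma^\textbf{a}\setminus U(0)$.
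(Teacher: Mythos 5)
Your proof is correct and follows essentially the same route as the paper's: assume the set is infinite, use Lemma~\ref{lemma-1.6} to make each $L_\sigma^\textbf{a} \setminus U(0)$ finite, extract a ``boundary'' element $m_\textbf{a}\in U(0)$ whose image under one fixed right translation by $(1,[\textbf{2}_k,\textbf{1}])$ is forced outside $U(0)$, and derive a contradiction with Lemma~\ref{lemma-1.1}(2). The only differences are cosmetic: by maximizing the first coordinate of $F_\textbf{a}$ you fix the direction $k=1$ from the outset, whereas the paper allows an arbitrary $k_\textbf{b}$ and then pigeonholes to make it constant on an infinite subset; you also spell out more carefully than the paper the preliminary reduction to a compact open $U(0)$ that legitimizes applying Lemma~\ref{lemma-1.1}(2) to $U(0)\setminus V(0)$.
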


\begin{proof}[\textsl{Proof}]
Suppose to the contrary that there exist neighborhood $U(0)$ of the zero $0$ and permutation $\sigma \in \mathscr{S}_n$ such that the set $M=\left\{\textbf{b}\in \mathbb{N}^n \colon L_\sigma^\textbf{b} \setminus U(0) \neq \varnothing\right\}$ is infinite. Since for any $\textbf{b}\in M$, by Lemma~\ref{lemma-1.6}, the set $L_\sigma^\textbf{b}\setminus U(0)$ is finite, there exist an element %$x_b\in L_\sigma^b$
$\textbf{x}_\textbf{b}\in \mathbb{N}^n$
and a positive integer $k_\textbf{b}\in\{1,\ldots,n\}$ such that $(\sigma,[\textbf{b},\textbf{x}_\textbf{b}])\notin L_\sigma^\textbf{b}\setminus U(0)$ and
%$(\sigma,[\textbf{b},\textbf{x}_\textbf{b}-\textbf{1}_{k_\textbf{b}}])\in L_\sigma^\textbf{b}\setminus U(0)$. This defines the maps:
${(\sigma,[\textbf{b},\textbf{x}_\textbf{b}-(0,\ldots,\underbrace{1}_{k_\textbf{b}}, \ldots, 0)])\in L_\sigma^\textbf{b}\setminus U(0)}$. This defines the maps:
$$\gamma\colon M \rightarrow \mathbb{N}^n, \; \textbf{b} \mapsto \textbf{x}_\textbf{b},$$
$$\phi\colon M \rightarrow \{1,\ldots, n\}, \; \textbf{b} \mapsto k_\textbf{b}.$$
Since $M$ is infinite and $(M)\phi$ finite, there exist an infinite subset $M^\prime\subset M$ and positive integer ${k_{M^\prime}\in\{1,\ldots,n\}}$ such that for any two elements $\textbf{u},\textbf{v}\in M^\prime$ the following equality
%$k_u=k_v=:k_{M_1}$.
$$(\textbf{u})\phi=(\textbf{v})\phi=k_{M^\prime}$$
holds.

Lemma~\ref{lemma-1.1} and the separate continuity of the semigroup operation in $({\mathscr{I\!\!P\!F}(\mathbb{N}^n)}^0,\tau)$ imply that there exists an open compact neighbourhood $V(0)$ of the zero $0$ in $({\mathscr{I\!\!P\!F}(\mathbb{N}^n)}^0,\tau)$ such that \break ${V(0)\cdot(1,[\textbf{2}_{k_{M^\prime}},1])\subset U(0)}$.

Put $P=\left\{(\sigma,[\textbf{b},(\textbf{b})\gamma]) \colon \textbf{b}\in M^\prime\right\}$. Then the choice of $M^\prime$ implies $P\subset U(0)\setminus V(0)$, which contradicts Lemma~\ref{lemma-1.1}(2), because the set $M^\prime$ is infinite.
\end{proof}

%Lemma~\ref{lemma-1.7} imply the following proposition.

\begin{corollary}\label{lemma-1.8}
Let $({\mathscr{I\!\!P\!F}(\mathbb{N}^n)}^0,\tau)$ be a locally compact non-discrete semitopological semigroup. Then for any neighborhood $U(0)$ of the zero $0$ the set ${\mathscr{I\!\!P\!F}(\mathbb{N}^n)}^0 \setminus U(0)$ is finite.
\end{corollary}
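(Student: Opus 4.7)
The plan is to reduce the corollary to a purely combinatorial bookkeeping argument using the decomposition of $\mathscr{I\!\!P\!F}(\mathbb{N}^n)$ into the pieces $L_\sigma^{\textbf{a}}$ together with the two finiteness facts already established in Lemmas~\ref{lemma-1.6} and \ref{lemma-1.7}.

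First I would observe that, since every non-zero element of $\mathscr{I\!\!P\!F}(\mathbb{N}^n)^0$ has the form $(\sigma,[\textbf{a},\textbf{x}])$ for a unique triple $\sigma\in\mathscr{S}_n$, $\textbf{a}\in\mathbb{N}^n$, $\textbf{x}\in\mathbb{N}^n$, the sets $L_\sigma^{\textbf{a}}$ partition $\mathscr{I\!\!P\!F}(\mathbb{N}^n)$. Consequently,
\begin{equation*}
\mathscr{I\!\!P\!F}(\mathbb{N}^n)^0\setminus U(0)\;\subseteq\; \bigcup_{\sigma\in\mathscr{S}_n}\;\bigcup_{\textbf{a}\in\mathbb{N}^n}\bigl(L_\sigma^{\textbf{a}}\setminus U(0)\bigr),
\end{equation*}
so it suffices to show that the right-hand side is a finite union of finite sets.

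Next I would fix an arbitrary $\sigma\in\mathscr{S}_n$. By Lemma~\ref{lemma-1.7}, the indexing set $M_\sigma=\{\textbf{a}\in\mathbb{N}^n:L_\sigma^{\textbf{a}}\setminus U(0)\neq\varnothing\}$ is finite. For each $\textbf{a}\in M_\sigma$, Lemma~\ref{lemma-1.6} guarantees that $L_\sigma^{\textbf{a}}\setminus U(0)$ is itself finite. Hence
\begin{equation*}
\bigcup_{\textbf{a}\in\mathbb{N}^n}\bigl(L_\sigma^{\textbf{a}}\setminus U(0)\bigr)\;=\;\bigcup_{\textbf{a}\in M_\sigma}\bigl(L_\sigma^{\textbf{a}}\setminus U(0)\bigr)
\end{equation*}
is a finite union of finite sets, and therefore finite.

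Finally, since the symmetric group $\mathscr{S}_n$ has only finitely many elements ($n!$ of them), taking the union over $\sigma\in\mathscr{S}_n$ keeps finiteness, yielding the desired conclusion that $\mathscr{I\!\!P\!F}(\mathbb{N}^n)^0\setminus U(0)$ is finite. There is no real obstacle here: all the substantive work has already been carried out in Lemmas~\ref{lemma-1.6} and \ref{lemma-1.7}, and the corollary is essentially a direct combination of the two via the natural partition of the semigroup. The only thing to be careful about is to correctly identify the zero as not contributing to the set $\mathscr{I\!\!P\!F}(\mathbb{N}^n)^0\setminus U(0)$ (since $0\in U(0)$), which is automatic.
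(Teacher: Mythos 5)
Your proposal is correct and follows essentially the same route as the paper: decompose $\mathscr{I\!\!P\!F}(\mathbb{N}^n)$ into the disjoint pieces $L_\sigma^{\textbf{a}}$, invoke Lemma~\ref{lemma-1.7} to reduce to finitely many indices $\textbf{a}$ per $\sigma$ and Lemma~\ref{lemma-1.6} to make each piece finite, and conclude by finiteness of $\mathscr{S}_n$. If anything, your write-up is slightly more explicit than the paper's, which cites only Lemma~\ref{lemma-1.7} while tacitly using Lemma~\ref{lemma-1.6} as well.
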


\begin{proof}[\textsl{Proof}]
Since
\begin{equation*}
\mathscr{I\!\!P\!F}(\mathbb{N}^n) = \bigsqcup\limits_{\sigma\in \mathscr{S}_n} \{\sigma\}\times(\bigsqcup\limits_{\textbf{a}\in \mathbb{N}^n} L_\sigma^\textbf{a}),
\end{equation*}
Lemma~\ref{lemma-1.7}, implies that the set
\begin{equation*}
\mathscr{I\!\!P\!F}(\mathbb{N}^n) \setminus U(0) =
{\bigsqcup\limits_{\sigma\in \mathscr{S}_n} \{\sigma\}\times\bigsqcup\limits_{\textbf{a}\in \mathbb{N}^n} L_\sigma^\textbf{a} \setminus U(0)} = {\bigsqcup\limits_{\sigma\in \mathscr{S}_n} \{\sigma\}\times\bigsqcup\limits_{\textbf{a}\in \mathbb{N}^n} L_\sigma^\textbf{a}\setminus U(0)}
\end{equation*}
is finite.
\end{proof}

\begin{example}\label{example-1.9}
We define a topology $\tau_{Ac}$ on the semigroup ${\mathscr{I\!\!P\!F}(\mathbb{N}^n)}^0$ in the followinh way:
\begin{itemize}
  \item[$(i)$] every element of the semigroup ${\mathscr{I\!\!P\!F}(\mathbb{N}^n)}$ is an isolated point in the space $({\mathscr{I\!\!P\!F}(\mathbb{N}^n)}^0,\tau_{Ac})$;
  \item[$(ii)$] the family $\mathcal{B}_{Ac}(0) = \left\{U \subset {\mathscr{I\!\!P\!F}(\mathbb{N}^n)}^0 : U \ni 0 \hbox{ and }  {\mathscr{I\!\!P\!F}(\mathbb{N}^n)} \setminus U \hbox{ is finite}\right\}$ de\-ter\-mines a base of the topology $\tau_{Ac}$ at zero $0 \in {\mathscr{I\!\!P\!F}(\mathbb{N}^n)}^0$
\end{itemize}
i.e., $\tau_{Ac}$ is the topology of the Alexandroff one-point compactification of the discrete space ${\mathscr{I\!\!P\!F}(\mathbb{N}^n)}$ with the remainder ${0}$. The semigroup operation in $({\mathscr{I\!\!P\!F}(\mathbb{N}^n)}^0,\tau)$ is separately continuous, because all elements of the semigroup ${\mathscr{I\!\!P\!F}(\mathbb{N}^n)}$ are isolated points in the space $({\mathscr{I\!\!P\!F}(\mathbb{N}^n)}^0,\tau)$ and any first order equation in $\mathscr{I\!\!P\!F}(\mathbb{N}^n$ has finitely many solutions (see Proposition~2.26 in \cite{Gutik-Mokrytskyi-2019}).
\end{example}

\begin{remark}\label{remark-1.3}
In \cite{Gutik-Mokrytskyi-2019} showed that the discrete topology $\tau_d$ is a unique topology on the semigroup ${\mathscr{I\!\!P\!F}(\mathbb{N}^n)}$ such that this is a semitopological semigroup. So $\tau_{Ac}$ is the unique compact topology on this semigroup such that $({\mathscr{I\!\!P\!F}(\mathbb{N}^n)}^0,\tau_{Ac})$ is a compact semitopological semigroup.
\end{remark}

Lemma~\ref{lemma-1.8} and Remark~\ref{remark-1.3} implies the following dichotomy for a locally compact semitopological semigroup ${\mathscr{I\!\!P\!F}(\mathbb{N}^n)}^0$.

\begin{theorem}\label{theorem-1}
If ${\mathscr{I\!\!P\!F}(\mathbb{N}^n)}^0$ is a Hausdorff locally compact semitopological semigroup, then either ${\mathscr{I\!\!P\!F}(\mathbb{N}^n)}^0$ is discrete or ${\mathscr{I\!\!P\!F}(\mathbb{N}^n)}^0$ is topologically isomorphic to $({\mathscr{I\!\!P\!F}(\mathbb{N}^n)}^0,\tau_{Ac})$.
\end{theorem}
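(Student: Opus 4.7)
The plan is to deduce the theorem directly from Corollary~\ref{lemma-1.8} together with the uniqueness content of Remark~\ref{remark-1.3}. All the analytic work has been done in the preceding lemmas; what remains is a topological identification.

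First, I would split into cases according to whether $(\mathscr{I\!\!P\!F}(\mathbb{N}^n)^0,\tau)$ is discrete. If it is, there is nothing to prove. Otherwise, the whole ladder of lemmas applies, and in particular Corollary~\ref{lemma-1.8} guarantees that every open neighbourhood of the zero has finite complement in $\mathscr{I\!\!P\!F}(\mathbb{N}^n)^0$. Observe also that every non-zero element of $\mathscr{I\!\!P\!F}(\mathbb{N}^n)^0$ is an isolated point of $(\mathscr{I\!\!P\!F}(\mathbb{N}^n)^0,\tau)$: indeed, by Hausdorffness $\mathscr{I\!\!P\!F}(\mathbb{N}^n) = \mathscr{I\!\!P\!F}(\mathbb{N}^n)^0 \setminus \{0\}$ is open, the induced topology on it is shift-continuous (the subset is a subsemigroup, since an adjoined zero is absorbing only when it is a factor), and Remark~\ref{remark-1.3} forces this induced topology to be discrete.

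Next, I would verify $\tau = \tau_{Ac}$ by comparing neighbourhood bases. The filters at non-zero points agree trivially, since those points are isolated in both topologies. At the zero, Corollary~\ref{lemma-1.8} gives immediately that every $\tau$-neighbourhood of $0$ lies in $\mathcal{B}_{Ac}(0)$. Conversely, any element of $\mathcal{B}_{Ac}(0)$ has finite complement consisting of $\tau$-isolated points; such a finite set is $\tau$-closed (a finite union of clopen singletons in a Hausdorff space), so the set itself is $\tau$-open. Hence $\tau = \tau_{Ac}$, and the identity map of $\mathscr{I\!\!P\!F}(\mathbb{N}^n)^0$ furnishes the required topological isomorphism.

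The entire content of the theorem has been absorbed into the preceding chain of lemmas culminating in Corollary~\ref{lemma-1.8}; no additional obstacle remains, and the argument above amounts to a bookkeeping assembly of Corollary~\ref{lemma-1.8}, Remark~\ref{remark-1.3}, and the definition of $\tau_{Ac}$ in Example~\ref{example-1.9}.
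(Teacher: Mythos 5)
Your proposal is correct and follows exactly the route the paper intends: the paper's own ``proof'' is the single sentence that Corollary~\ref{lemma-1.8} and Remark~\ref{remark-1.3} imply the dichotomy, and your argument simply fills in the implicit bookkeeping (non-zero points are isolated by Remark~\ref{remark-1.3} applied to the open subsemigroup $\mathscr{I\!\!P\!F}(\mathbb{N}^n)$, finite sets of isolated points are closed by Hausdorffness, and the neighbourhood filters at $0$ coincide by Corollary~\ref{lemma-1.8}). No gaps; this is the same proof, written out in full.
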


By Corollary~3.3 of \cite{Gutik-Mokrytskyi-2019} the semigroup $\mathscr{I\!\!P\!F}(\mathbb{N}^n)$ does not embed into a compact Hausdorff topological semigroup. Hence Theorem~\ref{theorem-1} implies the following corollary:

\begin{corollary}\label{corollary-1}
If ${\mathscr{I\!\!P\!F}(\mathbb{N}^n)}^0$ is a Hausdorff locally compact topological semigroup  then the space ${\mathscr{I\!\!P\!F}(\mathbb{N}^n)}^0$ is discrete.
\end{corollary}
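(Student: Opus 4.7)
The plan is to invoke Theorem~\ref{theorem-1} directly and then exclude the compact alternative using the cited embedding obstruction. Every Hausdorff locally compact topological semigroup is in particular a Hausdorff locally compact semitopological semigroup, so the dichotomy of Theorem~\ref{theorem-1} applies to $({\mathscr{I\!\!P\!F}(\mathbb{N}^n)}^0,\tau)$: either $\tau$ is the discrete topology, in which case we are done, or $({\mathscr{I\!\!P\!F}(\mathbb{N}^n)}^0,\tau)$ is topologically isomorphic to the Alexandroff one-point compactification $({\mathscr{I\!\!P\!F}(\mathbb{N}^n)}^0,\tau_{Ac})$, in which case the underlying space is compact.

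To rule out the second alternative, I would argue by contradiction: suppose $({\mathscr{I\!\!P\!F}(\mathbb{N}^n)}^0,\tau)$ is compact. Because we are assuming joint continuity of the semigroup operation, $({\mathscr{I\!\!P\!F}(\mathbb{N}^n)}^0,\tau)$ is then a compact Hausdorff topological semigroup. It contains ${\mathscr{I\!\!P\!F}(\mathbb{N}^n)}$ as a subsemigroup (as a discrete subspace, in fact, since all non-zero points are isolated in $\tau_{Ac}$), so the inclusion ${\mathscr{I\!\!P\!F}(\mathbb{N}^n)} \hookrightarrow ({\mathscr{I\!\!P\!F}(\mathbb{N}^n)}^0,\tau)$ is an embedding of ${\mathscr{I\!\!P\!F}(\mathbb{N}^n)}$ into a compact Hausdorff topological semigroup. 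This directly contradicts Corollary~3.3 of \cite{Gutik-Mokrytskyi-2019}, which asserts that no such embedding exists.

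Thus the compact case is impossible, and we are left with the discrete alternative, which is precisely the claim. There is essentially no technical obstacle here, since the heavy lifting has already been done in Theorem~\ref{theorem-1} and in the non-embeddability result from \cite{Gutik-Mokrytskyi-2019}; the only point to be careful about is distinguishing the hypotheses of the two results, namely that we now have a topological (jointly continuous) semigroup, so the non-embeddability obstruction is available, whereas Theorem~\ref{theorem-1} itself only required separate continuity.
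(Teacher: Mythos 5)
Your argument is exactly the paper's: the author also deduces the corollary by applying the dichotomy of Theorem~\ref{theorem-1} and eliminating the compact alternative via Corollary~3.3 of \cite{Gutik-Mokrytskyi-2019}, since in that case ${\mathscr{I\!\!P\!F}(\mathbb{N}^n)}$ would embed into a compact Hausdorff topological semigroup. Your proposal is correct and matches the intended proof, including the observation that joint continuity is what makes the non-embeddability obstruction available.
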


\section*{\textbf{Acknowledgements}}

The authors are grateful to the referee and his PhD advisor Oleg Gutik for several useful comments and suggestions.

\end{document}